\documentclass[11pt]{article}
\usepackage{amsmath}
\usepackage{amssymb}
\usepackage{graphicx}
\usepackage{amsthm}
\usepackage{verbatim}
\usepackage{color}
\usepackage{enumerate}
\usepackage{amsfonts}
\usepackage[all]{xy}
\usepackage{fancyhdr}
\usepackage{enumerate}
\theoremstyle{plain}

\newtheorem{theorem}{Theorem}[section]
\newtheorem{lemma}[theorem]{Lemma}

\theoremstyle{remark}
\newtheorem{remark}[theorem]{Remark}

\theoremstyle{definition}
\newtheorem{definition}[theorem]{Definition}
\newtheorem{example}[theorem]{Example}

\newtheorem{convention}[theorem]{Convention}

\begin{document}
\title{Triangulation of orbifolds with corners}
\author{Hao Yu}\date{}
\maketitle
\begin{abstract}
We prove that every compact orbifold with corners admits a triangulation in which the image of every codimension one face is a subcomplex. Faces need not be embedded, and the local groups may permute boundary components. The proof turns the boundary hyperplanes of linear charts into mirrors by means of the squaring map, which, unlike the folding $y\mapsto|y|$, is compatible with smooth changes of charts, and then applies the triangulation theorem for orbit spaces of proper Lie groupoids.
\end{abstract}

\section{Introduction}
An orbifold with corners is locally the quotient of an open subset of $R^n_k=[0,\infty)^k\times\mathbb{R}^{n-k}$ by a finite group acting smoothly and preserving the corner structure; the group may permute the boundary hyperplanes. Such spaces occur, for instance, as compactified moduli spaces of bordered Riemann surfaces, whose codimension one faces need not be embedded and whose local groups may permute boundary components. We prove:

\begin{theorem}\label{thm_intro}
Every compact orbifold with corners $P$ admits a triangulation $tri:|L|\to|P|$ such that, for every codimension one face $F$ of $P$, $i_P(F)$ is the image of a subcomplex. In particular, $|P|$ is a compact polyhedron.
\end{theorem}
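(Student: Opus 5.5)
The plan follows the strategy announced in the abstract: remove the corners by ``doubling'' each linear chart with the squaring map, so that $|P|$ becomes the orbit space of a proper Lie groupoid without boundary, and then apply the triangulation theorem for such orbit spaces. Concretely, for a chart $\tilde U\subset R^n_k$ with finite group $G$, I consider the open set $\hat U=\{(x,y)\in\mathbb{R}^k\times\mathbb{R}^{n-k}: (x_1^2,\dots,x_k^2,y)\in\tilde U\}$ and the map $s(x,y)=(x_1^2,\dots,x_k^2,y)$. The group $(\mathbb{Z}/2)^k$ acts on $\hat U$ by sign changes on the $x$-coordinates, and $s$ identifies $\hat U/(\mathbb{Z}/2)^k$ with $\tilde U$. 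The local model is then the extension $\hat G$ of $G$ by $(\mathbb{Z}/2)^k$, consisting of pairs (sign change, lift of $g$), acting on $\hat U$. Since $G$ only permutes the boundary hyperplanes, I expect each $g$ to lift to a smooth map of $\hat U$ defined up to signs. The key point is that a smooth map $\phi$ preserving the corner structure and fixing the face $\{x_i=0\}$ has the form $x_i\mapsto x_i a_i(x,y)$ with $a_i>0$ smooth. Under the square root this becomes $\hat x_i\mapsto \hat x_i\sqrt{a_i(\hat x^2,y)}$, which is smooth. The folding $y\mapsto|y|$ has no such property, because smooth functions of $|y|$ are not smooth in $y$.

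The steps, in order, are as follows. (1) Verify the lifting lemma just stated: every corner-preserving diffeomorphism between open subsets of $R^n_k$, and hence every group element and every chart transition, lifts to a diffeomorphism of the doubled charts, unique up to the sign group. Smoothness of $\sqrt{a_i(\hat x^2,y)}$ needs $a_i>0$ on the boundary, which follows from $\phi$ being a local diffeomorphism that preserves faces. (2) Assemble a Lie groupoid $\mathcal{G}$ whose object space is the disjoint union of the $\hat U_\alpha$ over a finite atlas (finite by compactness), and whose arrows are the germs of all lifts of transition maps composed with sign changes. Then check that $\mathcal{G}$ is étale and proper, and that its orbit space is homeomorphic to $|P|$ via the maps $s$. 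Properness should follow from properness of the original orbifold groupoid together with finiteness of the sign groups. (3) Apply the triangulation theorem for orbit spaces of proper Lie groupoids, in the form that provides a triangulation of the orbit space compatible with the orbit-type stratification, or more generally with a given finite family of closed invariant submanifolds or analytic-type subsets. (4) Show that for each codimension one face $F$, the set $i_P(F)$ is the image of a union of strata, or of an invariant subset to which the triangulation can be made compatible. In the doubled picture, points of $\{x_i=0\}$ have isotropy containing the reflection in $\hat x_i$, and the union of the fixed sets of reflections is exactly the preimage of $\partial P$. To separate different faces $F$, I would add the corresponding invariant subsets $\{\hat x_i=0\}$, grouped according to face, as extra closed invariant sets that the triangulation must respect.

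I expect the main obstacle to be step (4) together with the global coherence in step (2). When faces are not embedded and local groups permute the hyperplanes, the hyperplane $\{\hat x_i=0\}$ in one chart can belong to the same face $F$ as $\{\hat x_j=0\}$, and a single point can lie on $F$ several times. One therefore has to label hyperplanes by local branches of faces consistently across transitions, and show that $\bigcup\{\hat x_i=0\}$ over branches belonging to $F$ forms a closed $\mathcal{G}$-invariant set. For the triangulation theorem itself, I would first try the version that triangulates compatibly with the orbit-type stratification refined by finitely many invariant closed submanifolds with normal crossings. If that version is unavailable, I would instead enlarge $\mathcal{G}$ by formally recording face labels, that is, by working with the groupoid of a suitable covering by face branches, so that each $F$ becomes a union of orbit-type strata. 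Finally, $|P|$ is a compact polyhedron because $|L|$ is then a finite complex.
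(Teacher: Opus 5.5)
\textbf{There is a genuine gap in Step (4).} Your Steps (1)--(3) follow the paper's route: the squaring map, Hadamard's lemma giving $x_{\sigma(i)}\circ\phi=x_i a_i$ with $a_i>0$, the smooth lift $\hat x_i\sqrt{a_i\circ s}$, and then the triangulation theorem for proper \'etale groupoids. Step (4), however, is the only place where the faces enter, and it is not proved. You state the goal that $i_P(F)$ be a union of strata but give no argument. Your fallbacks are not available. The groupoid triangulation theorem, as cited, does not give compatibility with extra closed invariant submanifolds. ``Recording face labels'' in the groupoid is not an actual construction.

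The heuristic behind Step (4) is also false: the mirrors of reflections in $\hat G$ are not exactly the preimage of $\partial P$. If $g\in G$ interchanges $y_1,y_2$, its lift $(\eta_1,\eta_2)\mapsto(\eta_2,\eta_1)$ is a reflection whose mirror lies over the interior set $\{y_1=y_2>0\}$. In addition, $G$ may itself contain reflections in the $l$-directions. Take the $R^3_2/Z_2$ example in Example~\ref{ex_teardrop}. Points over $\{y_1=0<y_2\}$ and points over $\{y_1=y_2>0\}$ both have local group of $P'$ isomorphic to $Z_2$, generated by a reflection. So no criterion based on isotropy type alone can detect the boundary.

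\textbf{The missing idea: no refinement is needed, because the orbit-type strata already separate the faces.} This follows from a connectedness argument in linear charts.
\begin{enumerate}
\item Linearize with Lemma~\ref{lem_doubling}(1) before squaring. Then $\hat G=\Gamma\rtimes G$ is an honest finite group acting linearly on a box $\hat U$. This also removes your Step (2) issue of lifting a nonlinear $g$ only germwise.
\item Suppose $z=\hat\varphi(\eta)$ has local group of $P'$ of order $|\hat G|$. Then $\hat G_\eta=\hat G\supseteq\Gamma$, so $\eta_1=\dots=\eta_d=0$.
\item Hence $z$ lies on the same $d$ pieces $\tilde U\cap\{y_i=0\}$ as the centre $x$. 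Each piece is connected, so the set $\Phi(z)$ of faces $F$ with $z\in i_P(F)$ equals $\Phi(x)$.
\item Therefore $\Phi$ is locally constant on each orbit-type stratum. It is then constant on each open simplex of a triangulation subordinate to that stratification.
\item So $tri^{-1}(i_P(F))$ is a closed union of open simplices, i.e.\ a subcomplex.
\end{enumerate}
Since only the set $\Phi(z)$ is tracked, and not branch labels or multiplicities, non-embedded faces and permuted hyperplanes cause no coherence problem.

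One further point you take for granted: the groupoid theorem only gives closures of strata as subcomplexes. To get each stratum as a union of open simplices, you also need the frontier condition and the fact that the order of the local group strictly increases on the frontier of a stratum.
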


Theorem \ref{thm_intro} makes combinatorial tools available for such spaces, for instance for compactified moduli spaces of bordered Riemann surfaces, whose faces need not be embedded: simplicial and cellular chain models of $|P|$ relative to $i_P(\partial P)$ or to unions of faces, simplicial representatives of fundamental classes (for oriented $P$, with rational coefficients), and simplicial approximation of continuous maps.

Here $i_P:\partial P\to P$ is the boundary in the sense of Joyce \cite{jo} (Convention \ref{conv_faces}); faces need not be embedded. For orbifolds without boundary, triangulability of the underlying space follows from the triangulation of orbit spaces of proper Lie groupoids \cite[Corollary 7.2]{ppt}, which rests on the existence of control data and on the triangulation of abstract prestratified spaces \cite{gor78,v}. Moerdijk and Pronk \cite[Prop.~1.2.1]{mp99} regard such a triangulation as well known, citing Yang \cite{y} and Goresky \cite{gor78}; Yang's proof relies on a triangulation result of Cairns \cite{css} for regular locally polyhedral spaces which is incorrect, as recalled in the introduction of \cite{v}.

The natural approach to corners is doubling: by Lemma \ref{lem_doubling}, the folding $(\eta,l)\mapsto(|\eta_1|,\dots,|\eta_k|,l)$ identifies a linear corner chart $R^n_k/G$ with $\mathbb{R}^n/\hat G$, $\hat G=\mathbb{Z}_2^k\rtimes G$, turning the boundary hyperplanes into mirrors. This identification is a homeomorphism, but it is not compatible with smooth changes of charts: the corner-preserving diffeomorphism $y\mapsto y+y^2$ of $[0,\infty)$ lifts under the folding to $\eta\mapsto\eta+\eta|\eta|$, which is only $C^1$. Hence the folded charts do not define a smooth structure on $|P|$, and the theory of Whitney stratifications and control data cannot be applied to them directly. The squaring map $(\eta,l)\mapsto(\eta_1^2,\dots,\eta_k^2,l)$ does not have this defect: by Hadamard's lemma, corner-preserving embeddings lift to smooth embeddings (Step 2 of the proof of Theorem \ref{thm_triangulation}). This produces an orbifold $P'$ without boundary with $|P'|=|P|$, to which \cite{ppt} applies, and the faces of $P$ are then read off from the orders of the local groups of $P'$. Even when all local groups of $P$ are trivial, i.e.\ for manifolds with corners with possibly non-embedded faces, $P'$ is a reflection orbifold, so this case, too, is reduced to orbifolds.

Theorem \ref{thm_intro} was stated in earlier versions of \cite{yu_ght}, with an incomplete proof based on the folding. It is not needed for the comparison theorem proved there.

In Section \ref{sec_orb} we recall orbifolds with corners, their boundaries and linear charts; Section \ref{sec_tri} contains the proof.

\section{Orbifolds with corners}\label{sec_orb}
We write $R^n=\mathbb{R}^n$, $Z_2=\mathbb{Z}/2$, $R^n_k=[0,\infty)^k\times\mathbb{R}^{n-k}$, $R^n_+=R^n_n$, and $R^n_{+k}\subseteq R^n_+$ for the set of points with exactly $k$ coordinates equal to $0$, i.e.\ the points of depth $k$. Smooth maps on subsets of $R^n_k$ are understood to be smooth up to the corners, i.e.\ locally restrictions of smooth maps defined on open subsets of $\mathbb{R}^n$. All orbifolds are assumed to be \textit{effective}: each local group acts effectively on its chart.

\begin{definition}\label{orbifold}
(Orbifolds).
Let $X$ be a topological space, and fix $n \geq 0$.
\begin{itemize}
\item An $n$-dimensional orbifold chart on $X$ is given by a connected open subset
$\tilde{U}\subseteq R^n$, a finite group $G$ acting smoothly on $\tilde{U}$, 
and a map $\varphi:\tilde{U}\rightarrow X$ such that $\varphi$ is $G$-invariant and induces a homeomorphism of $\tilde{U}/G$  onto
an open subset $U \subseteq  X$.
\item An embedding $\lambda : (\tilde{U} , G, \varphi  ) \hookrightarrow (\tilde{V} ,H, \psi)$ between two such charts is a
smooth embedding $\lambda : \tilde{U} \hookrightarrow \tilde{V}$
 with $\psi \lambda = \varphi$.  An orbifold atlas on $X$ is a family $\mathcal{U} = {(\tilde{U} , G, \varphi)}$ of such charts, which
cover $X$ and are locally compatible: given any two charts $(\tilde{U} , G, \varphi )$ for
$U = \varphi(\tilde{U}) \subset X$ and $(\tilde{V},H,\psi  )$ for $V \subseteq X$, and a point $x \in U \cap V$ , there
exists an open neighbourhood $W \subseteq U \cap V$ of $x$ and a chart $(\tilde{W},K,\mu  )$ for $W$
such that there are embeddings $(\tilde{W},K,\mu  ) \hookrightarrow (\tilde{U}, G, \varphi  )$ and $(\tilde{W},K,\mu  )\hookrightarrow (\tilde{V},H,\psi)$.
\item An atlas $\mathcal{U}$ is said to refine another atlas $\mathcal{V}$ if for every chart in $\mathcal{U}$ there
exists an embedding into some chart of $\mathcal{V}$. Two orbifold atlases are said to be
equivalent if they have a common refinement.
\end{itemize}
An orbifold of dimension $n$, or $n$-orbifold, is a paracompact Hausdorff space $X$ equipped with an equivalence class $\mathcal{U}$ of $n$-dimensional orbifold atlases.
\end{definition}

\begin{definition}\label{orbi_corners}
(Orbifolds with corners).
Let $X$ be a topological space, and fix $n \geq 0$.
\begin{itemize}
\item A generalized $n$-dimensional orbifold chart on $X$ is given by a connected open subset
$\tilde{U} \subseteq R^n_+$, a finite group $G$ acting smoothly on $\tilde{U}$, 
and a map $\varphi :\tilde{U} \rightarrow X$ such that $\varphi$ is $G$-invariant, $G$ maps each $\tilde{U}_k := \tilde{U} \cap R^n_{+k}$ to itself (it may permute the connected components of $\tilde U_k$), and $\varphi$ induces a homeomorphism of $\tilde{U}/G$  onto
an open subset $U \subseteq X$.
\item An embedding $\lambda : (\tilde{U} , G, \varphi  ) \hookrightarrow (\tilde{V} ,H,\psi  )$ between two such charts is a
diffeomorphism $\lambda:\tilde{U}\to\lambda(\tilde U)$ onto an open subset of $\tilde{V}$ such that $\lambda$ and $\lambda^{-1}$ are smooth up to the corners (locally restrictions of smooth maps between open subsets of $R^n$), with $\lambda(\tilde{U}_k)\subseteq\tilde{V}_k$ for all $k$, and $\psi \lambda = \varphi$.  An orbifold atlas on $X$ is a family $\mathcal{U} = {(\tilde{U} , G, \varphi  )}$ of such charts, which
cover $X$ and are locally compatible: given any two charts $(\tilde{U} , G, \varphi  )$ for
$U = \varphi(\tilde{U}) \subseteq X$ and $(\tilde{V} ,H, \psi )$ for $V \subseteq X$ and a point $x \in U \cap V$ , there
exists an open neighbourhood $W \subseteq U \cap V$ of $x$ and a chart $(\tilde{W},K,\mu  )$ for $W$
such that there are embeddings $(\tilde{W},K,\mu  ) \hookrightarrow (\tilde{U} , G, \varphi  )$ and $(\tilde{W},K,\mu  ) \hookrightarrow (\tilde{V} ,H,\psi  )$.
\item An atlas $\mathcal{U}$ is said to refine another atlas $\mathcal{V}$ if for every chart in $\mathcal{U}$ there
exists an embedding into some chart of $\mathcal{V}$. Two generalized orbifold atlases are said to be
equivalent if they have a common refinement.
\end{itemize}
An orbifold with corners of dimension $n$, or $n$-orbifold with corners, is a paracompact Hausdorff space $X$ equipped with an equivalence class $\mathcal{U}$ of $n$-dimensional generalized orbifold atlases.
\end{definition}
\begin{remark}
In Definition \ref{orbi_corners} the local groups may permute the connected components of $\tilde U_k$, as in Convention \ref{conv_faces} and Lemma \ref{lem_doubling}; this is needed, for instance, for moduli spaces of bordered Riemann surfaces, where automorphisms may permute boundary components. Requiring each component to be $G$-invariant gives a strictly smaller class: $[0,\infty)^3/\mathbb{Z}_3$, with $\mathbb{Z}_3$ permuting the coordinates cyclically, has a single ray of points of depth two emanating from the image of $0$, whereas every model in which the local group preserves the components has three. 
\end{remark}

\begin{convention}[boundary]\label{conv_faces}
We use the boundary of a manifold with corners in the sense of Joyce \cite[Definitions~2.6 and 2.10]{jo}. As a set, $\partial M$ consists of pairs $(x,\beta)$, where $x\in M$ and $\beta$ is a local boundary component of $M$ at $x$. It is a manifold with corners of dimension $\dim M-1$, and the map $i_M:\partial M\to M$, $(x,\beta)\mapsto x$, is smooth \cite[Theorem~3.4(iv)]{jo}. A point of depth $l$ (i.e.\ one that is sent to the origin by a chart modelled on $R^n_l$) has exactly $l$ preimages under $i_M$. 

For an orbifold with corners $Q$, the \textit{depth} of $x\in|Q|$ is the depth of any preimage of $x$ in a chart; it is well defined since local groups and embeddings of charts preserve depth. Locally $Q=\tilde U/G$, and we put $\partial(\tilde U/G):=(\partial\tilde U)/G$, where $g\in G$ acts by $(x,\beta)\mapsto(gx,g(\beta))$. Since $G$ may permute the connected components of $\partial\tilde U$, the local models are $(\tilde C,G_{\tilde C})$, where $\tilde C$ runs over these components and $G_{\tilde C}$ is the stabiliser of $\tilde C$. These local models glue to an orbifold with corners $\partial Q$ together with a map $i_Q:\partial Q\to Q$. A point of depth $l\ge1$ now has between $1$ and $l$ preimages under $i_Q$, since the local group may permute the local boundary components. The orbifold $\partial Q$ is again effective, since a finite order linear map fixing a boundary hyperplane pointwise and preserving the half-space is the identity. The \textit{codimension one faces} of $Q$ are the connected components of $\partial Q$. We say that $Q$ has \textit{embedded faces} if $i_Q$ is injective on each of them. Simplices and their products have embedded faces. The teardrop \cite[Remark~2.11]{jo} does not (Example \ref{ex_teardrop}), and neither do many moduli spaces of curves. We do not assume embedded faces.
\end{convention}

\begin{example}\label{ex_teardrop}
Let $P$ be the teardrop, a closed disc with one corner point $c$, regarded as an orbifold with corners with trivial local groups, and let $C$ be its boundary circle. The point $c$ has depth two and the other points of $C$ have depth one, so $i_P$ maps $i_P^{-1}(C\setminus\{c\})$ bijectively onto the open arc $C\setminus\{c\}$, and the two local boundary components at $c$ are the germs of the two ends of this arc. Hence $\partial P$ has a single face $F$, which is diffeomorphic to $[0,1]$, and $i_P$ maps both endpoints of $F$ to $c$; thus $i_P$ is not injective on $F$, and $i_P(F)=C$ is the whole boundary circle. By contrast, the square $[0,1]^2$ has four faces, each mapped diffeomorphically onto an edge, so its faces are embedded. For orbifolds, the local groups produce a related phenomenon already locally. Let $Z_2$ act on $R^3_2$, with coordinates $(y_1,y_2,l)$, by interchanging $y_1$ and $y_2$. Since the two boundary hyperplanes $\{y_1=0\}$ and $\{y_2=0\}$ are interchanged, every point of depth two of $Q=R^3_2/Z_2$, i.e.\ every point of the image of the corner $\{y_1=y_2=0\}$, has a single preimage under $i_Q$, that is, a single local boundary component.
\end{example}

We shall use the following linearization of corner charts. The doubled group in (2) is in general a semidirect product, since the local group may permute the boundary hyperplanes.
\begin{lemma}[Doubling lemma]\label{lem_doubling}
Let $G$ be a finite group acting smoothly on a $G$-invariant open neighbourhood $U$ of $0$ in $R^n_k$, preserving the corner structure and fixing $0$. Then:
\begin{enumerate}
\item After shrinking $U$ and applying a $G$-equivariant, corner-preserving diffeomorphism onto a neighbourhood of $0$, there is a linear isomorphism $R^n\cong R^k\oplus R^{n-k}$, $x=(y,l)$, carrying $R^n_k$ to $\{y_1,\dots,y_k\ge 0\}$, such that $G$ acts linearly by $g(y,l)=(P_g\,y,\ A_g\,l)$ with $P_g$ a permutation matrix.
\item Let $\Gamma\cong Z_2^k$ be the group generated by the reflections $r_i: y_i\mapsto -y_i$. Then $G$ normalizes $\Gamma$, and $\hat G:=\Gamma\rtimes G$ acts linearly on $R^n$. The inclusion $R^n_k\subset R^n$ induces a homeomorphism $R^n_k/G\cong R^n/\hat G$.

\end{enumerate}
\end{lemma}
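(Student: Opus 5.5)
For part (1), I would follow the standard Bochner linearization, adapted so that it respects corners. Let $A=d_0g$ denote the differential action of $G$ at $0$. Since each $g$ preserves $R^n_k$ near $0$ and fixes $0$, $d_0g$ preserves the tangent cone $[0,\infty)^k\times R^{n-k}$. A linear automorphism preserving this cone must permute its extremal rays. The cone is the product of the simplicial cone $[0,\infty)^k$ with its lineality space $\{0\}\times R^{n-k}$. So $d_0g$ preserves $\{0\}\times R^{n-k}$. It also permutes the faces $\{y_i=0\}$, and hence permutes the $k$ coordinate hyperplanes.

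I would then average to get a $G$-invariant complement. Choose a $G$-invariant inner product on $R^n$ and let $W$ be the orthogonal complement of $E=\{0\}\times R^{n-k}$. The projections of the rays $e_1,\dots,e_k$ modulo $E$ give a basis $f_1,\dots,f_k$ of $W$ that is permuted up to positive scalars. Replacing $f_i$ by $\sum_{g}$ of the appropriate translates and rescaling, I would obtain a basis permuted exactly by $G$. In the coordinates $(y,l)$ from $W\oplus E$, the cone is still $\{y\ge0\}$, because the hyperplanes $y_i=0$ are $E$ plus the other $f_j$'s. The linear action then takes the form $(P_g y, A_g l)$.

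Next I would take the averaged map $\phi=\frac1{|G|}\sum_g (d_0g)^{-1}\circ g$. It is $G$-equivariant, $d_0\phi=\mathrm{id}$, and smooth up to corners. Each summand maps the face $\{y_i=0\}$ into the face $\{y_{\sigma(i)}=0\}$ and then back to $\{y_i=0\}$ after the linear inverse. Hence each summand preserves each hyperplane $\{y_i=0\}$, and therefore so does the average. The key check is that each $y_i\circ\phi$ equals $y_i\cdot(\text{positive smooth function})$ near $0$, which follows from Hadamard's lemma. This makes $\phi$ a corner-preserving local diffeomorphism, and it remains one after shrinking to a $G$-invariant neighbourhood. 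I expect this corner-preservation argument for the average to be the main point needing care.

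For part (2), I would first verify the normalization: $g r_i g^{-1}=r_{\sigma_g(i)}$ because $P_g$ is a permutation matrix, so $\hat G=\Gamma\rtimes G$ acts linearly. The map $R^n_k/G\to R^n/\hat G$ is continuous. It is surjective because every $\hat G$-orbit meets $\{y\ge0\}$ via reflections. For injectivity, suppose $\gamma g x=x'$ with $x,x'\in R^n_k$. Then $gx\in R^n_k$, and $\gamma gx$ lies in $R^n_k$ as well. This forces $\gamma$ to act trivially on $gx$, since $r_i$ only flips signs of coordinates $y_i$, and those coordinates are zero wherever they would be flipped. Hence $x'=gx$. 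Finally, $R^n/\hat G$ is Hausdorff, and the folding map $(|y_i|,l)$ gives a continuous inverse. This is again well defined because the folding commutes with $P_g$. Therefore the map is a homeomorphism.
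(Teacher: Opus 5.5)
Your proposal is correct and follows the paper's proof in substance. Both use the same averaged map $\phi=\frac{1}{|G|}\sum_g (d_0g)^{-1}\circ g$ with Hadamard's lemma giving $y_i\circ\phi=y_i c_i$, $c_i(0)=1$, the same splitting of the cone into a simplicial cone in the orthogonal complement of the lineality space for a $G$-invariant inner product, and the folding map as the inverse in (2). The paper is more explicit at one point: it extends $\phi$ with components $y_i\tilde c_i$, $\tilde c_i>0$, and applies the inverse function theorem, which is the step behind your ``corner-preserving local diffeomorphism''. The remaining differences are cosmetic: you do the linear algebra before the averaging, and instead of summing translates the paper takes the unit vectors on the rays, which $G$ permutes exactly because it acts orthogonally.
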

\begin{proof}
(1) Since $g$ preserves the corner structure, $dg_0$ preserves the tangent cone $R^n_k$ and permutes the boundary hyperplanes $\{x_i=0\}$, $i\le k$. Put $\varphi(x)=\frac{1}{|G|}\sum_{g\in G}(dg_0)^{-1}g(x)$. Each summand maps $U\cap\{x_i=0\}$ into $\{x_i=0\}$ and $U\cap\{x_i>0\}$ into $\{x_i>0\}$, and these sets are convex, so $\varphi$ preserves the corner structure. Moreover $d\varphi_0=\mathrm{id}$ and $\varphi\circ g=dg_0\circ\varphi$. By Hadamard's lemma $x_i\circ\varphi=x_i\,c_i$ with $c_i(0)=1$, $i\le k$. Extend $\varphi$ smoothly to a neighbourhood of $0$ in $R^n$ with $i$-th component $x_i\tilde c_i$, where $\tilde c_i>0$ extends $c_i$. By the inverse function theorem this extension is a diffeomorphism near $0$, and it maps $R^n_k$ onto $R^n_k$ near $0$, preserving depth in both directions. Hence $\varphi$ is a corner-preserving diffeomorphism near $0$ that linearizes the action. Now choose a $G$-invariant inner product. The lineality space $L=\{x_1=\dots=x_k=0\}$ of the cone $R^n_k$ is $G$-invariant, hence so is $L^\perp$, and $R^n_k=C\times L$ with $C\subset L^\perp$ a simplicial cone. The group $G$ permutes the extremal rays of $C$; if $w_i$ denotes the unit vector on the $i$-th ray, then $g w_i=w_{\sigma_g(i)}$ exactly. The coordinates $y_i$ with respect to the basis $(w_i)$ of $L^\perp$, together with linear coordinates $l$ on $L$, have the required properties.

(2) $g r_i g^{-1}=r_{\sigma_g(i)}$, so $G$ normalizes $\Gamma$. The map $(y,l)\mapsto(|y_1|,\dots,|y_k|,l)$ $G$-equivariantly identifies $R^n/\Gamma$ with $R^n_k$; hence $R^n/\hat G=(R^n/\Gamma)/G\cong R^n_k/G$.

\end{proof}

\section{The triangulation}\label{sec_tri}
By a \textit{triangulation} of a compact space $Y$ we mean a homeomorphism $tri: |L| \rightarrow Y$ from the underlying space of a finite simplicial complex $L$. It is \textit{subordinate} to a stratification $\mathcal{S}$ of $Y$ if $tri^{-1}(R)$ is a union of open simplices of $L$ for every stratum $R \in \mathcal{S}$. If $Y=|P|$ for an orbifold with corners $P$, we say that the triangulation is \textit{compatible with the corners} if $i_P(F)$ is the image of a subcomplex for every codimension one face $F$ of $P$.

\begin{theorem}[Theorem \ref{thm_intro}]\label{thm_triangulation}
Every compact orbifold with corners $P$ admits a triangulation compatible with its corners. In particular, $|P|$ is a compact polyhedron.
\end{theorem}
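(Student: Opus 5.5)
We follow the strategy announced in the introduction: replace $P$ by an orbifold $P'$ without boundary on the same underlying space, triangulate $|P'|$ by \cite[Corollary 7.2]{ppt} subordinately to a suitable stratification, and read off the faces of $P$ from that stratification.

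\emph{Step 1 (linear charts).} By Lemma \ref{lem_doubling}(1) and compactness, we fix a finite atlas of linear corner charts $(\tilde U, G, \varphi)$, each with $\tilde U\subseteq R^k_+\times R^{n-k}$ a $G$-invariant neighbourhood of $0$ and $G$ acting by $g(y,l)=(P_gy,A_gl)$. For each such chart we put
\[
\tilde U'=\{(\eta,l)\in R^n : (\eta_1^2,\dots,\eta_k^2,l)\in\tilde U\},\qquad \hat G=\Gamma\rtimes G,
\]
and take $\varphi'(\eta,l)=\varphi(\eta_1^2,\dots,\eta_k^2,l)$. The squaring map $s$ is $\hat G$-equivariant, $\Gamma$-invariant, and induces $R^n/\Gamma\cong R^n_k$, so $\varphi'$ induces a homeomorphism $\tilde U'/\hat G\cong \tilde U/G$. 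The group $\hat G$ acts linearly and effectively. Hence each corner chart yields an ordinary orbifold chart on the same open set of $|P|$.

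\emph{Step 2 (compatibility, the main obstacle).} We must show that the charts of Step 1 form an orbifold atlas. Given a corner-preserving embedding $\lambda$ between linear corner charts, we write its first $k$ components near a point as $y_i\circ\lambda=y_{\sigma(i)}c_i$ with $c_i>0$, using Hadamard's lemma and depth preservation. For charts of different depth we first pass to charts centred at the relevant point, so that the hyperplanes match. We then define the lift
\[
\lambda'(\eta,l)=\Bigl(\eta_{\sigma(i)}\sqrt{c_i(s(\eta,l))}\Bigr)_i \text{ together with the remaining components of }\lambda\circ s,
\]
which is smooth because $c_i>0$, and satisfies $s\circ\lambda'=\lambda\circ s$. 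It is a local diffeomorphism by the inverse function theorem, since $d\lambda'_0$ is invertible. Local compatibility of the corner atlas then lifts: the neighbourhoods $W$ and the embeddings of Definition \ref{orbi_corners} produce lifted embeddings, after restricting to connected components of preimages where necessary. The delicate points are two. First, $\tilde U'$ need not be connected, so we restrict to the component containing $0$, using that $\tilde U$ is a small star-shaped linear neighbourhood. Second, lifts are unique only up to $\Gamma$, which is harmless for Definition \ref{orbifold}. The result is a compact effective orbifold $P'$ with $|P'|=|P|$.

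\emph{Step 3 (triangulation and faces).} We stratify $|P'|$ by orbit type, that is, by conjugacy class of the isotropy groups of $\hat G$. By \cite[Corollary 7.2]{ppt} there is a triangulation $tri:|L|\to|P'|=|P|$ subordinate to this stratification, and closures of strata are then images of subcomplexes. Depth is constant on orbit-type strata, since it is the number of $\eta_i$ vanishing at a lift, which equals the number of reflections $r_i$ in the isotropy group $\hat G_x$. Now take a codimension one face $F$. Its image $i_P(F)$ is closed and is a union of depth $\ge1$ points, and it is the closure of the set of its depth one points; this holds because in a linear chart $i_P(F)$ is locally a $G$-saturated union of boundary hyperplanes. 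The depth one locus is the union of the strata whose isotropy is generated by a single reflection, and $i_P(F)$ is the closure of a union of connected components of that locus. It therefore suffices that the connected components of the depth one locus be unions of strata. We arrange this by refining the stratification to connected components of strata, which is still admissible in \cite{ppt}. Taking closures then shows that $i_P(F)$ is a subcomplex. Polyhedrality of $|P|$ is immediate from the existence of $tri$.
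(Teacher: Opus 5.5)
Your proposal is essentially correct. Steps 1 and 2 follow the paper: the squaring map and the Hadamard lift $\eta_{\sigma(i)}\sqrt{c_i\circ s}$. Your way of getting an embedding, the inverse function theorem at the centre followed by shrinking $\hat W$, is a legitimate shortcut for the paper's global injectivity and local-inverse argument, since local compatibility only needs small charts.

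Your last step is genuinely different. The paper reads \cite{ppt} as giving only that closures of strata are subcomplexes. It upgrades this to subordination using the frontier condition and the strict increase of the order of the local group, and then shows that the set $\Phi(z)$ of faces through $z$ is constant on open simplices. You instead argue as follows:
\begin{enumerate}
\item $i_P(F)$ is the closure of the part of the depth-one locus $D_1$ that it contains, which is a union of connected components of $D_1$. Indeed, $i_P$ maps the depth-zero part of $\partial P$ homeomorphically onto $D_1$.
\item Each component of $D_1$ is a union of connected strata once depth is constant on strata.
\item Hence $i_P(F)$ is a finite union of closures of strata.
\end{enumerate}
This uses only the weaker closure statement of \cite{ppt} plus finiteness of the stratification of a compact space. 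So you can drop your claim of subordination and bypass the subordination argument of the paper's Step 3. The paper's route yields the stronger conclusion that $tri$ is subordinate to the orbit type stratification.

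Three points need repair.
\begin{enumerate}
\item You cannot ``pass to charts centred at the relevant point'' on the target side. To check compatibility of two lifted atlas charts at a point of lower depth than their centres, you must lift embeddings $\lambda$ with $y_j(\lambda(0))>0$ for some $j$. For those coordinates the lift must be $(y_j\circ\lambda\circ s)^{1/2}$, not $y_j\circ\lambda\circ s$, or $s\circ\lambda'=\lambda\circ s$ fails. With this change $d\lambda'_0$ is still invertible: it is block triangular, and the relevant block of $d\lambda_0$ only has its rows rescaled by positive constants.
\item The claim that the depth-one locus is the union of strata whose isotropy is generated by a single reflection is false in both directions. Take $P=[0,1]\times(S^1/Z_2)$, with $Z_2$ reflecting $S^1$. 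The two mirror edges have depth $0$ and isotropy of order two generated by a reflection in $P'$. The four corners of the square have depth one and isotropy $Z_2^2$. Simply drop this claim; you only need that $D_1$ is a union of strata.
\item For the same reason, depth is not a function of orbit type. In this example the set of points with isotropy $Z_2$ contains edges of depth $0$ and of depth $1$, because $P'$ cannot distinguish the boundary reflections $r_i$ from reflections in $G$. Counting the $r_i$ in $\hat G_\eta$ proves only that depth is locally constant along a stratum. Even that requires noting that nearby points of the same orbit type have the same isotropy subgroup: in a linear chart centred at $x$, equal order forces $\hat G_\eta=\hat G\supseteq\Gamma$, so all $\eta_i=0$, as in the paper's Step 4. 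Depth is therefore constant on connected strata, which is exactly what your refinement to connected components provides.
\end{enumerate}
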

\begin{proof}
We turn the boundary hyperplanes of the charts of $P$ into mirrors. This gives an orbifold $P'$ \emph{without} boundary with $|P'|=|P|$, to which the triangulation theorem for orbit spaces of proper Lie groupoids applies; the faces of $P$ are then read off from the local groups of $P'$. The proof uses no ambient manifold.

\textit{Step 1: linear charts.} Let $x\in|P|$ have depth $d$. Restrict a chart to a small neighbourhood of a preimage $\tilde x$ of $x$ that is invariant under the stabiliser of $\tilde x$, translate and permute coordinates, and apply Lemma \ref{lem_doubling}(1), choosing the coordinates $l$ orthonormal for a $G$-invariant inner product. This gives a chart $(\tilde U,G,\varphi)$ with $\varphi(0)=x$, coordinates $(y,l)\in R^d\times R^{n-d}$, $g(y,l)=(P_gy,A_gl)$ with $P_g$ a permutation matrix and $A_g$ orthogonal, and $\tilde U=\{0\le y_i<\varepsilon,\ |l|<\varepsilon\}$. (A translation of the $l$-coordinates identifies $\tilde U$ with an open subset of $R^n_+$, as Definition \ref{orbi_corners} requires.) We call such a chart \emph{linear, centred at $x$}. The group $G$ acts effectively on $\tilde U$: an element acting trivially on $\tilde U$ is a finite-order diffeomorphism of the original (connected) chart that is the identity on a non-empty open subset, hence is the identity, and the local groups of $P$ act effectively. Restrictions of charts to smaller invariant neighbourhoods are charts, and embeddings compose with such restrictions and with linearizing diffeomorphisms. Hence for any two charts whose images contain $x$ there is an arbitrarily small linear chart centred at $x$ embedding into both. By Definition \ref{orbi_corners}, embeddings of charts are diffeomorphisms onto open subsets, smooth up to the corners, and they preserve depth.

\textit{Step 2: the orbifold $P'$.} For a linear chart put $s(\eta,l):=(\eta_1^2,\dots,\eta_d^2,l)$, $\hat U:=s^{-1}(\tilde U)=\{|\eta_i|<\varepsilon^{1/2},\ |l|<\varepsilon\}$ and $\hat\varphi:=\varphi\circ s$. Let $\hat G=\Gamma\rtimes G$, $\Gamma\cong Z_2^d$, act linearly on $R^n$ as in Lemma \ref{lem_doubling}(2). Then $s\circ\hat g=g\circ s$ for $\hat g\in\hat G$ with image $g\in G$, and $s$ induces $\hat U/\Gamma\cong\tilde U$. Hence $\hat\varphi$ induces a homeomorphism $\hat U/\hat G\cong\tilde U/G$. The action is effective: if $\gamma g$ acts trivially, then $g$ acts trivially on $s(\hat U)=\tilde U$, so $g=1$ and then $\gamma=1$.

Let $\lambda\colon\tilde W\to\tilde U$ be an embedding of a linear chart $(\tilde W,K,\mu)$ centred at a point of depth $d'$. Let $J$ be the set of $j\le d$ with $y_j(\lambda(0))=0$. Then $|J|=d'$, and after shrinking $\tilde W$ we have $y_j\circ\lambda>0$ for $j\notin J$. For $i\le d'$ the connected set $\{y_i=0,\ y_m>0\ (m\le d',\ m\neq i)\}\cap\tilde W$ consists of points of depth one, so $\lambda$ maps it, and hence, by continuity, $\{y_i=0\}\cap\tilde W$, into a single hyperplane $\{y_{\sigma(i)}=0\}$ with $\sigma(i)\in J$. Since $d\lambda_0$ is invertible, it cannot map two different hyperplanes $\{y_i=0\}$, $\{y_{i'}=0\}$ into one hyperplane, so $\sigma$ is a bijection onto $J$. Since $\lambda$ preserves depth, $y_{\sigma(i)}\circ\lambda$ vanishes exactly on $\{y_i=0\}$. As $\tilde W$ is convex and stable under $y_i\mapsto ty_i$, $t\in[0,1]$, Hadamard's lemma gives $y_{\sigma(i)}\circ\lambda=y_i\,a_i$ with $a_i$ smooth. We have $a_i>0$ off $\{y_i=0\}$. On $\{y_i=0\}$ we have $a_i=\partial_{y_i}(y_{\sigma(i)}\circ\lambda)\neq0$, since $d\lambda$ is invertible and maps $T\{y_i=0\}$ into $\ker dy_{\sigma(i)}$; hence $a_i>0$ there too. Define $\hat\lambda\colon\hat W\to\hat U$ by
\begin{equation*}
\eta'_{\sigma(i)}=\eta_i\,(a_i\circ s)^{1/2}\ (i\le d'),\qquad \eta'_j=(y_j\circ\lambda\circ s)^{1/2}\ (j\notin J),\qquad l'=l\circ\lambda\circ s .
\end{equation*}
Then $\hat\lambda$ is smooth and $s\circ\hat\lambda=\lambda\circ s$, hence $\hat\varphi\circ\hat\lambda=\hat\mu$. Moreover $\mathrm{sign}\,\eta'_{\sigma(i)}=\mathrm{sign}\,\eta_i$. The map $\hat\lambda$ is injective: if $\hat\lambda(p)=\hat\lambda(q)$, then $s(p)=s(q)$ because $\lambda$ is injective, so $p$ and $q$ differ only by signs of the $\eta_i$, and these signs agree because $\hat\lambda$ preserves them. It is also a local diffeomorphism. Let $p\in\hat W$; we adapt the construction to $\lambda^{-1}$ on a small box $B$ around $q=\lambda(s(p))$, writing $s$ for the squaring maps of both charts. For $i\le d'$ with $y_i(s(p))=0$, Hadamard's lemma on $B$ (which is convex and stable under $y_{\sigma(i)}\mapsto ty_{\sigma(i)}$) gives $y_i\circ\lambda^{-1}=y_{\sigma(i)}b_i$ with $b_i>0$, and we put $\kappa_i=\eta'_{\sigma(i)}(b_i\circ s)^{1/2}$; for the other $i\le d'$ we put $\kappa_i=\pm(y_i\circ\lambda^{-1}\circ s)^{1/2}$ with the sign of $\eta_i(p)$; the $l$-components are $l\circ\lambda^{-1}\circ s$. This gives a smooth $\kappa$ with $s\circ\kappa=\lambda^{-1}\circ s$ and $\kappa(\hat\lambda(p))=p$. Then $h=\kappa\circ\hat\lambda$ satisfies $s\circ h=s$, each coordinate $h_i$ is $\eta_i$ times a positive smooth function or $\pm\eta_i$ with constant sign, and $h(p)=p$; hence $h=\mathrm{id}$ near $p$, and likewise $\hat\lambda\circ\kappa=\mathrm{id}$ near $\hat\lambda(p)$. So $\hat\lambda$ is an embedding of charts. By Step 1, the charts $(\hat U,\hat G,\hat\varphi)$ cover $|P|$ and are locally compatible, so they form an orbifold atlas in the sense of Definition \ref{orbifold}. This defines a compact effective orbifold $P'$ without boundary with $|P'|=|P|$, whose local group at $\hat\varphi(\eta)$ is the stabiliser $\hat G_\eta$.

The squaring is essential. With the folding $y=|\eta|$ of Lemma \ref{lem_doubling}(2), the corner-preserving diffeomorphism $y\mapsto y+y^2$ of $[0,\infty)$ lifts to $\eta\mapsto\eta+\eta|\eta|$, which is only $C^1$; with $y=\eta^2$ it lifts to $\eta\mapsto\eta(1+\eta^2)^{1/2}$. For the same reason, charts of $|P|$ built from the folding, such as those given by invariant polynomials of $\hat G$, are in general not compatible under corner-preserving changes of coordinates: in the chart $u=y^2$ the map above becomes $u\mapsto u+2u^{3/2}+u^2$, which is not smooth at $0$. The construction is local, so faces need not be embedded, and the permutations $P_g$ and $\sigma$ account for local groups and embeddings that permute boundary hyperplanes. For the teardrop (Example \ref{ex_teardrop}), $P'$ is a disc whose boundary circle is a mirror, with local group $Z_2^2$ at the corner point.

\textit{Step 3: triangulating $P'$.} The orbifold $P'$ is represented by a proper \'etale Lie groupoid with orbit space $|P|$ (\cite{mp}, \cite[\S3]{mo}). By \cite[Cor.~5.4]{ppt}, this orbit space, stratified by orbit types, carries a smooth structure satisfying Whitney's condition (B) in its singular charts, and admits smooth control data. These control data come from \cite[Thm.~2.9]{ppt}, cf.\ \cite[Prop.~7.1]{mat70} and \cite[Thm.~3.6.9]{pf01}, and do not require an ambient manifold. By \cite[Thm.~7.1, Cor.~7.2]{ppt}, which rest on \cite[5.~Proposition]{gor78} and \cite[3.7.~Corollary]{v}, there is a triangulation $tri\colon|L|\to|P|$ such that $tri^{-1}(\bar S)$ is a subcomplex for every stratum $S$ of the decomposition of \cite[Thm.~5.7(ii)]{ppt}. (In \cite[\S7]{ppt} this is phrased as $tri^{-1}(S)$ being a subcomplex, cf.\ \cite[\S3]{gor78}; since strata need not be closed, we read it as a statement about their closures.) The complex $L$ is finite since $|L|$ is compact. The strata are the connected components of the sets of points of $|P'|$ whose local groups lie in a given isomorphism class (\cite[Def.~5.6(i), Thm.~5.7(ii),(iii), Prop.~5.12]{ppt}; for orbifolds this is the usual stratification, \cite[Rem.~5.5(ii)]{ppt}), so the order $n(S)$ of the local group is constant on each stratum $S$.

We claim that $R\neq S$ and $R\cap\bar S\neq\emptyset$ imply $R\subseteq\bar S$ and $n(R)>n(S)$. The inclusion is \cite[Def.~2.1 (DC2)]{ppt}. For the inequality, take $r=\hat\varphi(\eta)\in R$, put $H=\hat G_\eta$, and use the chart of $P'$ given by a small $H$-invariant ball around $\eta$ in $\hat U$, on which $H$ acts by affine maps fixing $\eta$. Points near $r$ have local groups isomorphic to subgroups of $H$. Those whose local group has order $|H|$ are the images of the fixed-point set of $H$ in the ball, which is connected and contained in $R$. Since $r\in\bar S$ and $S\cap R=\emptyset$, points of $S$ near $r$ have local groups of order less than $|H|=n(R)$. Consequently, if $R\subseteq\bar S\setminus S$, every stratum meeting $\bar R$ has order at least $n(R)>n(S)$, so $\bar R\cap S=\emptyset$. Hence $tri^{-1}(S)=tri^{-1}(\bar S)\setminus\bigcup_{R\subseteq\bar S\setminus S}tri^{-1}(\bar R)$ is a union of open simplices, i.e.\ $tri$ is subordinate to the orbit type stratification, and the order of the local group of $P'$ is constant on each open simplex of $L$.

\textit{Step 4: the faces.} For $z\in|P|$ let $\Phi(z)$ be the set of codimension one faces $F$ of $P$ with $z\in i_P(F)$. We claim that $\Phi$ is locally constant on every set on which the order of the local group of $P'$ is constant. Let $(\tilde U,G,\varphi)$ be a linear chart centred at a point $x$ of depth $d$, and let $z=\hat\varphi(\eta)$ with $|\hat G_\eta|=|\hat G|$. Then $\hat G_\eta=\hat G\supseteq\Gamma$, so $\eta_1=\dots=\eta_d=0$. Thus $z=\varphi(0,l)$ lies on the $d$ hyperplanes $\{y_i=0\}$ and has depth $d$, and its local boundary components are, as at $x$, the germs of $\{y_i=0\}$, $i\le d$. Near $x$, $\partial P=(\partial\tilde U)/G$ with $\partial\tilde U=\bigsqcup_{i\le d}\tilde U\cap\{y_i=0\}$ (Convention \ref{conv_faces}). Each piece is connected, so the points of $\partial P$ given by $\{y_i=0\}$ at $z$ and at $x$ lie in the same component of $\partial P$. Thus $\Phi(z)=\Phi(x)$.

Open simplices are connected, so by Step 3 $\Phi\circ tri$ is constant on each of them, and $tri^{-1}(i_P(F))$ is a union of open simplices. It is closed, since $i_P(F)$ is compact ($\partial P$ is compact and $F$ is one of its finitely many components), and a closed union of open simplices is a subcomplex. Hence $tri$ is compatible with the corners, and $|P|\cong|L|$ is a compact polyhedron.
\end{proof}

\section{Tool Use}
The theorem and the strategy of the proof (doubling the corners into mirrors, Whitney regularity, and Verona’s triangulation theorem) are due to the author, from earlier versions of arXiv:2004.07698. The AI assistant checked the validty of the proofs. The author takes full responsibility for the content.

\noindent Hao Yu, {\it Independent researcher, Beijing, China.}\\
{\tt dustless2014@163.com}
\end{document}